%
\documentclass[runningheads]{llncs}
\usepackage{amsmath}
\usepackage{amssymb}
\usepackage[applemac]{inputenc}
\usepackage[colorlinks]{hyperref}
\usepackage{url}
\usepackage{graphicx,epstopdf}
\usepackage{subcaption}
 \usepackage{epstopdf}
\usepackage[applemac]{inputenc}
\usepackage[colorlinks]{hyperref}
\usepackage{url}
\usepackage{subcaption}
\usepackage{hyperref}
\usepackage[T1]{fontenc}
%
\usepackage{graphicx}
\usepackage[all]{xy}

\newcommand\de{\delta}
\newcommand\ep{\varepsilon}

\newcommand\io{\iota}
\newcommand\ka{\kappa}

\newcommand\ta{\tau}

\newcommand\Si{\Sigma}

\newcommand\on{\operatorname}

\newcommand\Diff{\on{Diff}}
\renewcommand\o{\circ}
\newcommand\x{\times}

\newcommand\F{\mathcal F}
\renewcommand\S{\mathcal S}

\newcommand\C{\mathcal C}

\renewcommand\P{\mathcal P}
\newcommand\G{\mathcal G}
\newcommand\E{\mathcal E}


%
%
\begin{document}
\title{Shape spaces of nonlinear flags \thanks{Supported by FWF grant I 5015-N, Institut CNRS Pauli, University of Timi\c{s}oara and University of Lille}}
%
%

\author{Ioana~Ciuclea\inst{1}\orcidID{0000-0002-8467-2706} \and
Alice~Barbora~Tumpach\inst{2, 3}\orcidID{0000-0002-7771-6758}\and
Cornelia~Vizman\inst{1}\orcidID{0000-0002-5551-3984}}
\authorrunning{Ciuclea, Tumpach and Vizman}
%
\institute{ 
West University of Timi\c{s}oara,  bld. Vasile P\^{a}rvan no. 4, Timi\c{s}oara, 300223 Timi\c{s}~county, Romania
\email{ioana.ciuclea@gmail.com, cornelia.vizman@e-uvt.ro }
\and
Institut CNRS Pauli, Oskar-Morgenstern-Platz 1, 1090 Vienna, Austria \and
University of Lille, Cit\'e scientifique, 59650 Villeneuve d'Ascq, France
\email{alice-barbora.tumpach@univ-lille.fr}\\
\url{http://math.univ-lille1.fr/~tumpach/Site/home.html}}

\maketitle              
\begin{abstract}

The shape space considered in this article consists of surfaces embedded in $\mathbb{R}^3$, that are decorated with curves. It is a special case of the Fr\'echet manifolds of nonlinear flags, i.e. nested submanifolds of a fixed type. 
The gauge invariant elastic metric on the shape space of surfaces involves the mean curvature and the normal deformation, i.e. the sum and the difference of the principal curvatures $\kappa_1,\kappa_2$. The proposed gauge invariant elastic metrics on the space of surfaces decorated with curves involve, in addition, the geodesic and normal curvatures  $\kappa_g,\kappa_n$ of the curve on the surface, as well as the geodesic torsion $\tau_g$.

 More precisely, we show that, with the help of the Euclidean metric, the tangent space at $(C,\Sigma)$ can be identified with  $C^\infty(C)\times C^\infty(\Sigma)$ 
 and the gauge invariant elastic metrics
 form a 6-parameter family:
 \begin{align*}
 \G_{(C,\Sigma)}(h_1,h_2)&=
 a_1\int_C(h_1\kappa_g+{h_2}|_C\kappa_n)^2d\ell
 &+ 
a_2\int_{\Sigma}(h_2)^2(\kappa_1-\kappa_2)^2dA\\
& +b_1\int_C(D_sh_1-{h_2}|_C\tau_g)^2d\ell
 &+
 b_2\int_{\Sigma} (h_2)^2(\kappa_1+\kappa_2)^2dA\\
&+c_1\int_C(D_s({h_2}|_C)+h_1\tau_g)^2d\ell
&+
c_2\int_{\Sigma} |\nabla h_2|^2 dA,
 \end{align*}
where $h_1\in C^\infty(C),h_2\in C^\infty(\Sigma)$.

\keywords{Shape space  \and Geometric Green Learning \and Gauge invariant elastic metrics.}
\end{abstract}
\section{Introduction}

In this paper we use the elastic metrics on parameterized curves (\cite{Mio,Srivastava}) and parameterized surfaces (\cite{JermynECCV2012}) in order to endow the shape space of  surfaces decorated with curves  with a family of Riemannian metrics. 
This shape space of decorated surfaces is an example of Fr\'echet manifold of nonlinear flags,  studied in \cite{VizHal}. 
These consist of nested submanifolds of a fixed type $S_1\xrightarrow{\iota_1}S_2\xrightarrow{\iota_2}\cdots\to S_r$, which in our case is the equator embedded in the sphere $\mathbb{S}^1\xrightarrow{\iota}\mathbb{S}^2$.

We emphasize that we do not use the quotient elastic metrics on curves and surfaces, but rather their gauge invariant relatives (see \cite{TumPres,TumPres2}). Indeed, following \cite{Tum2,Tum1} for surfaces in $\mathbb{R}^3$, and \cite{Drira} for curves in $\mathbb{R}^3$, we construct degenerate metrics on parameterized curves and surfaces by first projecting an arbitrary variation of a given curve or surface onto the space of vector fields perpendicular to the curve or surface (for the Euclidean product of $\mathbb{R}^3$) and then applying the elastic metric on this component. By construction, vector fields tangent to the curve or surface will have vanishing norms, leading to a degenerate metric on pre-shape space. However, since the degeneracy is exactly along the tangent space to the orbit of the reparameterization group, these degenerate metrics define Riemannian (i.e. non-degenerate) metrics on shapes spaces of curves and surfaces. The following advantages of this procedure can be mentioned:
\begin{itemize}
\item[$\bullet$] there is no need to compute a complicated horizontal space in order to define a Riemannian metric on shape space
\item[$\bullet$] the length of paths in shape space equals the length of any of their lifts for the corresponding degenerate metric, a property called \textit{gauge invariance} in \cite{Tum2,Tum1}.
\item[$\bullet$] the resulting Riemannian metric on shape spaces can be easily expressed in terms of geometric invariants of curves and surfaces, leading to expressions that are completely independant of parameterizations.
\end{itemize}

In this paper, we use the gauge invariant (degenerate) metrics on parameterized curves and surfaces obtained from the elastic metrics via the procedure described above in order to define Riemannian metrics on shape spaces of curves and surfaces. Then we embed the shape space of nonlinear flags consisting of surfaces decorated with curves into the Cartesian product of the shape space of curves in $\mathbb{R}^3$ with the shape space of surfaces in $\mathbb{R}^3$. The Riemannian metric obtained on the shape space of nonlinear flags can be made explicit thanks to a precise description of its tangent space (Theorem~\ref{identification}) and thanks to the geometric expressions of the metrics used on curves and surfaces, leading to a $6$-parameter family of natural Riemannian metrics (Theorem~\ref{theorem_expression_metric}).  

Note that although the space of parameterized flags is a principal fiber bundle over the shape space of flags as it is the case for parameterized curves or surfaces, in this case there is no natural complement to the tangent space to the fibers.  
Another way to express this difference is that the principal bundle of parameterized curves or surfaces in $\mathbb{R}^3$ has a canonical connection, which is not the case for the space of nonlinear flags.

\section{Manifolds of decorated surfaces as shape spaces}

\subsection{Notations}

We will consider the \textit{shape space} of nonlinear flags consisting of pairs $(C, \Sigma)$  such that $C$ is a curve on the surface $\Sigma$ embedded in $\mathbb{R}^3$. We will restrict our attention to surfaces of genus $0$, and simple curves (the complement to the curve in the surface has only two connected components), but our construction can be extended without substantial changes to surfaces of genus $g$ and to a finite number of curves. 
We denote the space of all such nonlinear flags by
$$
\mathcal{F} := \operatorname{Flag}_{\mathbb{S}^1\stackrel{\iota}{\hookrightarrow} \mathbb{S}^2}(\mathbb{R}^3).
$$
The general setting for the Fr\'echet manifolds on nonlinear flags of a given type $S_1\xrightarrow{\iota_1}S_2\xrightarrow{\iota_2}\cdots\to S_r$
can be found in \cite{VizHal}.

Examples of elements $(C, \Sigma)\in \mathcal{F}$ are given in Fig.~1.
\begin{figure*}[ht!]
\includegraphics[width = 0.4 \linewidth]{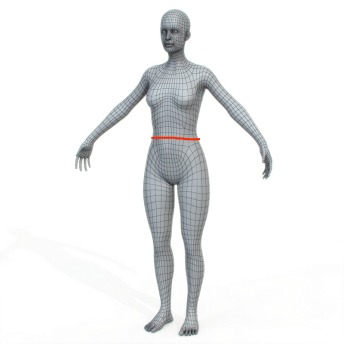}
\includegraphics[width = 0.2 \linewidth]{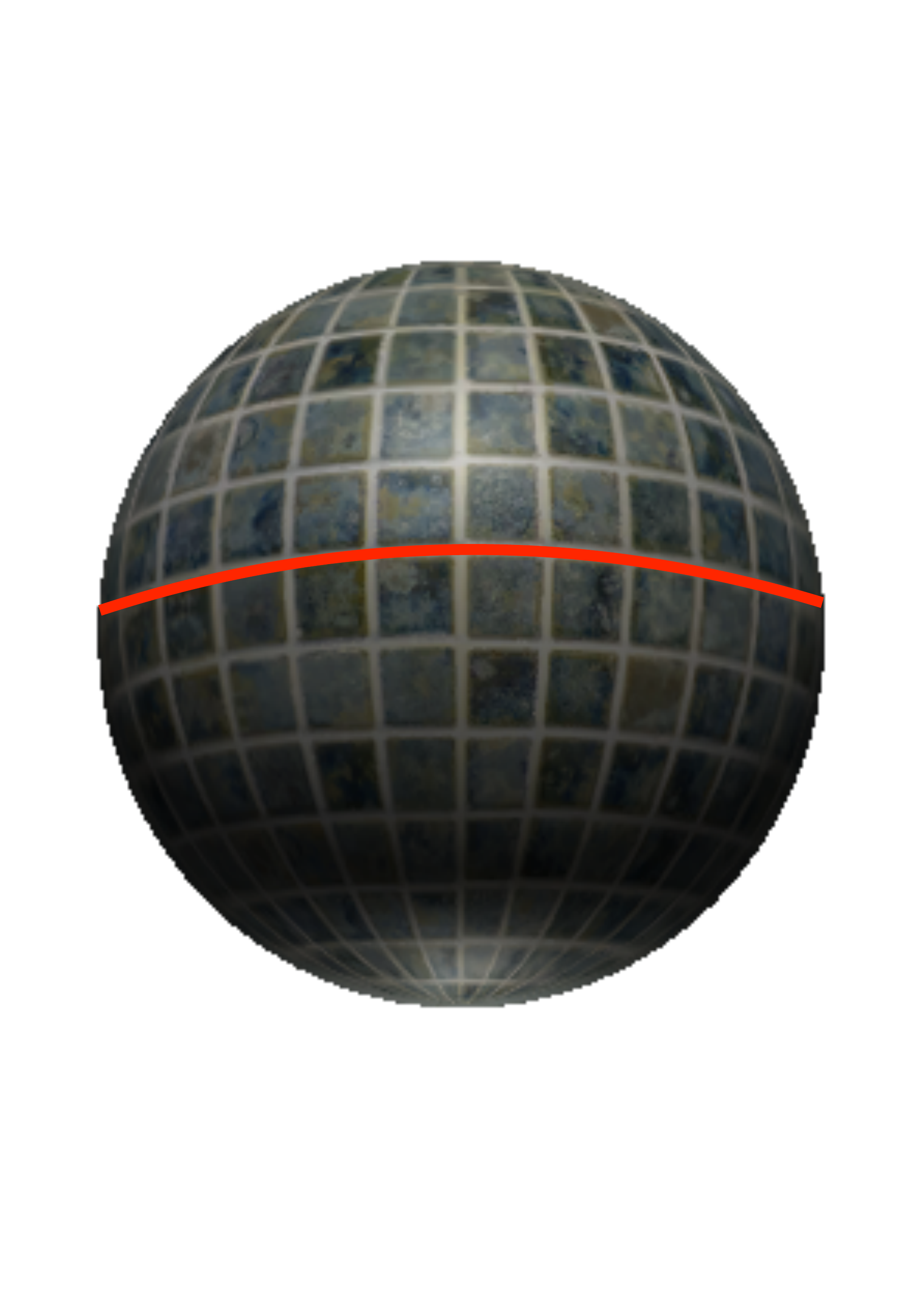}
\includegraphics[width = 0.3 \linewidth]{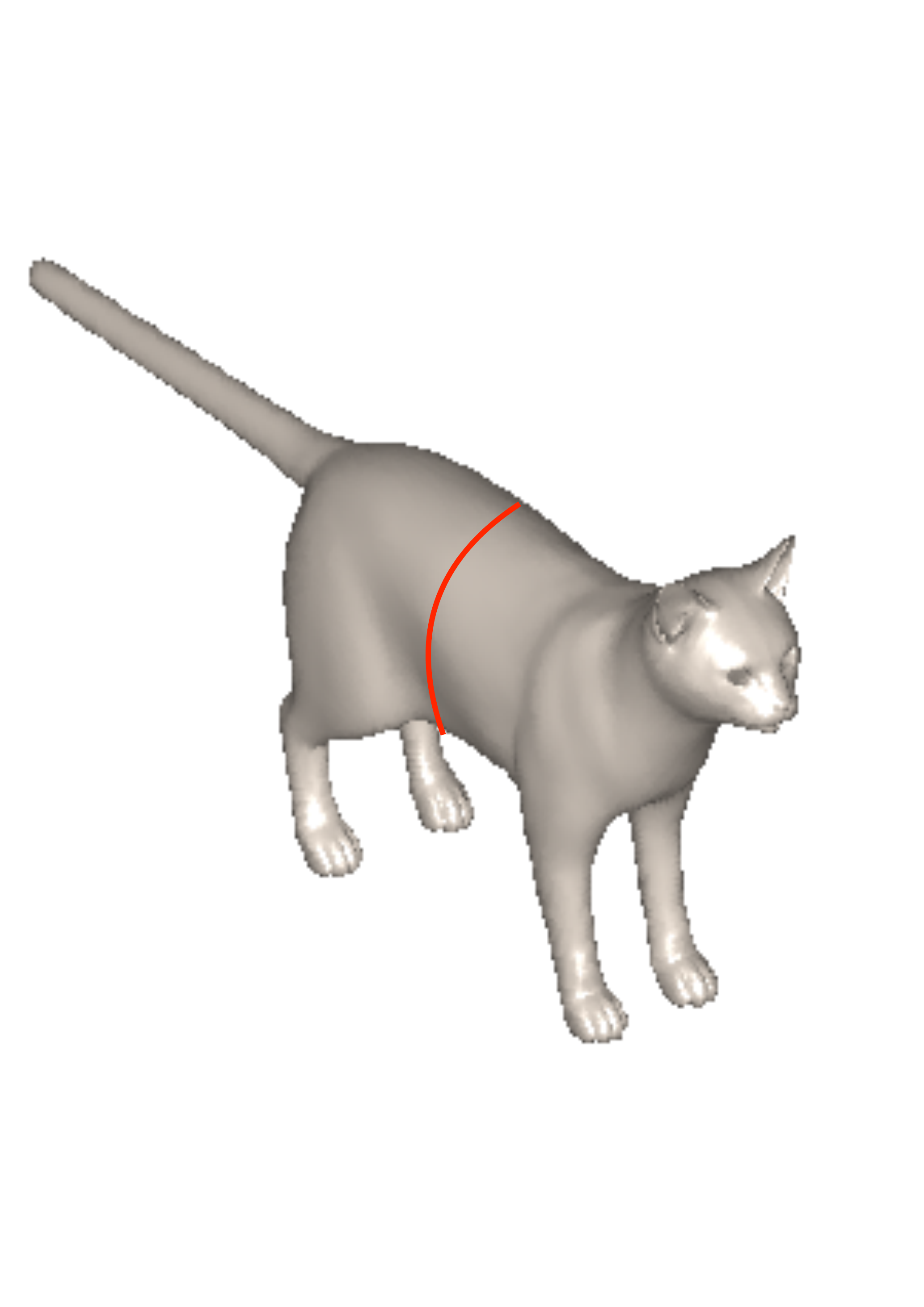}
\caption{Examples of elements in the shape space of nonlinear flags.}
\end{figure*}\label{Examples}

We will be interested in deforming flags. To this aim, we will consider a reference flag to be the pair $(\mathbb{S}^1, \mathbb{S}^2)$ consisting in the unit sphere $\mathbb{S}^2$ and the unit circle $\mathbb{S}^1$ embedded as the equator $\iota~:\mathbb{S}^1\hookrightarrow \mathbb{S}^2$.
We will represent a general flag $(C, \Sigma)$ using an embedding $F~:\mathbb{S}^2\rightarrow\mathbb{R}^3$ such that 
the image of the restriction  $F\circ \iota$ of $F$ to the equator is $C$. 
The pair $(F\circ\iota, F)$ is also called a \textit{parameterization} of the flag $(C, \Sigma)$. The space of parametrized flags is 
$$\mathcal{P} := 
\{F~:\mathbb{S}^2\rightarrow \mathbb{R}^3, F \textrm{~is~an~embedding}\}.$$
It is called the \textit{pre-shape space} of flags since objects with same shape but different
 parameterizations correspond to different points in $\mathcal{P}$. 
The set $\mathcal{P}$ is  a manifold, as an open subset of the linear space $\mathcal{C}^{\infty}(\mathbb{S}^2, \mathbb{R}^3)$ of smooth functions from 
$\mathbb{S}^2$ to $\mathbb{R}^3$. The tangent space to $\mathcal{P}$ at $F$, denoted by $T_F\mathcal{P}$, is therefore just $\mathcal{C}^{\infty}(\mathbb{S}^2, \mathbb{R}^3)$.

There is a natural projection $\pi$ from the space of parameterized flags $\mathcal{P}$ onto the space of  flags $\mathcal{F}$ given by 
\begin{equation}\label{pi}
\pi(F) = ((F\circ \iota)(\mathbb{S}^1), F(\mathbb{S}^2)).
\end{equation}


\subsection{Shape space of nonlinear flags as quotient space}

Since we are only interested in unparameterized nonlinear flags, 
we would like to identify pairs of parameterized curves and surfaces that can be related through reparameterization. 
The reparametrization group $G$ is the group of diffeomorphisms $\gamma$ of $\mathbb{S}^2$ which restrict to a diffeomorphism of the equator $\iota~: \mathbb{S}^1\hookrightarrow \mathbb{S}^2$~:
$$
\begin{array}{ll}
G 
 &= \{\gamma\in\operatorname{Diff}(\mathbb{S}^2)~:~\gamma\circ\iota = \iota\circ\bar{\gamma} \textrm{ for some } \bar{\gamma}\in \operatorname{Diff}(\mathbb{S}^1)\}.
\end{array}
$$ 
The group $G$ is an infinite-dimensional Fr\'echet Lie group whose Lie algebra is the space of vector fields on $\mathbb{S}^2$ whose restriction to the equator is tangent to the equator.
The right action of $G$ on $\mathcal{P}$ is given by $F\cdot \gamma := F\circ \gamma$.
It's a principal action for the principal $G$-bundle $\pi:\P\to\F$.

The elements in $\mathcal{P}$ obtained by following a fixed 
parameterized flag $F\in\mathcal{P}$ when 
acted on by all elements of $G$ is called the $G$-\textit{orbit} of $F$ or the \textit{equivalence class} of $F$ under the action of $G$,
and will be denoted by $[F]$. 
The orbit of $F\in \mathcal{P}$ is characterized by the surface $ \Sigma:= F(\mathbb{S}^2) $ and the curve $C:= (F\circ \iota)(\mathbb{S}^1)$, hence $\pi(F) = (C, \Sigma)$ (see \eqref{pi}).
The elements in the orbit $[F] = \{F\circ \gamma, \textrm{~for~}\gamma \in G\}$ are all possible parameterizations of $(C,  \Sigma)$ of the form $(F\circ \iota, F)$.
For instance in Fig.~2 one can see some parameterized hands with bracelets that are elements of the same orbit.  
The set of orbits of $\mathcal{P}$ under the group $G$ is called the \textit{quotient space} and will be denoted by $\mathcal{P}/G$. 
\begin{figure*}[ht!]
\includegraphics[width = \textwidth]{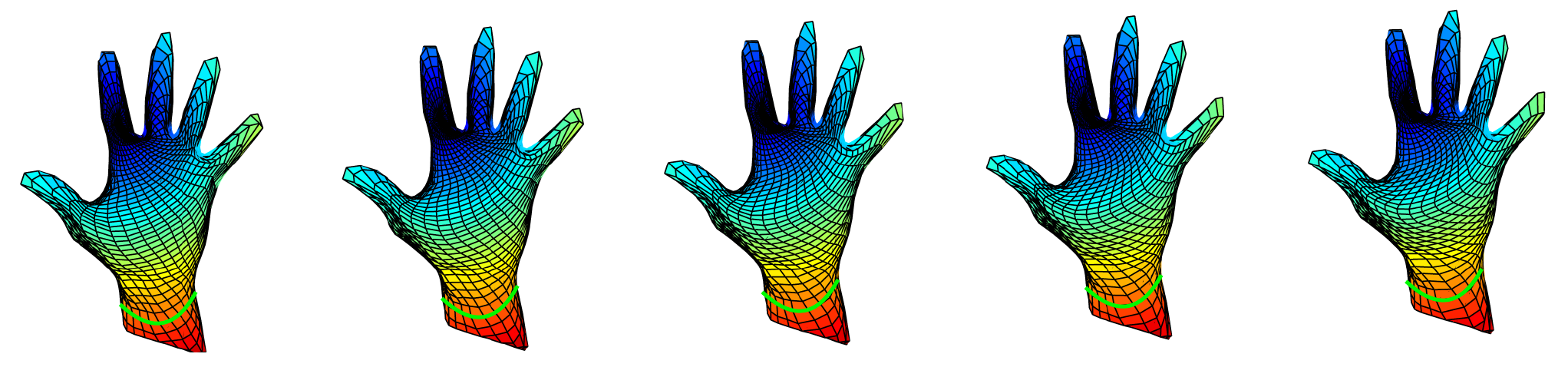}
\caption{Examples of elements in the same orbit under the group of reparameterizations.}
\end{figure*}
\begin{proposition}
The \textit{shape space}  $\mathcal{F}$ is  isomorphic to the quotient space of the pre-shape space  
$\mathcal{P}$ by the shape-preserving group $G = \operatorname{Diff}(\mathbb{S}^2; \iota)$~:
$$\mathcal{F} = \mathcal{P}/G.$$
\end{proposition}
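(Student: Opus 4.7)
The plan is to show that the projection $\pi:\mathcal{P}\to\mathcal{F}$ defined in \eqref{pi} descends to a bijection $\bar\pi:\mathcal{P}/G\to\mathcal{F}$. This requires verifying three things: that $\pi$ is $G$-invariant, that $\pi$ is surjective, and that its fibres are exactly the $G$-orbits. The $G$-invariance is immediate from the definition of $G$, since for $\gamma\in G$ there is $\bar\gamma\in\operatorname{Diff}(\mathbb{S}^1)$ with $\gamma\circ\iota=\iota\circ\bar\gamma$, and hence $(F\circ\gamma\circ\iota)(\mathbb{S}^1)=(F\circ\iota\circ\bar\gamma)(\mathbb{S}^1)=(F\circ\iota)(\mathbb{S}^1)$, while $(F\circ\gamma)(\mathbb{S}^2)=F(\mathbb{S}^2)$.

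For surjectivity, I take an arbitrary flag $(C,\Sigma)\in\mathcal{F}$ and construct an embedding $F\in\mathcal{P}$ realising it. Since $\Sigma$ is a closed genus-$0$ surface embedded in $\mathbb{R}^3$, it is diffeomorphic to $\mathbb{S}^2$, so there is at least one embedding $F_0:\mathbb{S}^2\to\mathbb{R}^3$ with $F_0(\mathbb{S}^2)=\Sigma$. The preimage $F_0^{-1}(C)$ is a smoothly embedded simple closed curve in $\mathbb{S}^2$ that separates $\mathbb{S}^2$ into two discs (by the hypothesis on $C$). By the smooth Schoenflies theorem in dimension two, such a curve is ambient-isotopic, and in fact related by a diffeomorphism $\psi\in\operatorname{Diff}(\mathbb{S}^2)$, to the equator $\iota(\mathbb{S}^1)$. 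Setting $F:=F_0\circ\psi$ then yields $F(\mathbb{S}^2)=\Sigma$ and $(F\circ\iota)(\mathbb{S}^1)=C$, so $\pi(F)=(C,\Sigma)$.

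For injectivity of $\bar\pi$, I suppose $F_1,F_2\in\mathcal{P}$ both satisfy $\pi(F_i)=(C,\Sigma)$. Since each $F_i$ is an embedding of $\mathbb{S}^2$ with image $\Sigma$, it is a diffeomorphism onto $\Sigma$, so $\gamma:=F_2^{-1}\circ F_1\in\operatorname{Diff}(\mathbb{S}^2)$ is well defined and $F_1=F_2\circ\gamma$. The equality $(F_1\circ\iota)(\mathbb{S}^1)=C=(F_2\circ\iota)(\mathbb{S}^1)$ then gives $\gamma(\iota(\mathbb{S}^1))=\iota(\mathbb{S}^1)$, so $\gamma$ preserves the equator setwise. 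Since $\iota$ is an embedding, $\bar\gamma:=\iota^{-1}\circ\gamma\circ\iota$ is a well defined diffeomorphism of $\mathbb{S}^1$ satisfying $\gamma\circ\iota=\iota\circ\bar\gamma$, so $\gamma\in G$ and $[F_1]=[F_2]$ in $\mathcal{P}/G$.

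The main obstacle is the surjectivity step, specifically the assertion that a simple closed separating curve on a genus-$0$ surface can be brought to the equator by a diffeomorphism; this rests on the smooth Schoenflies theorem for $\mathbb{S}^2$ together with the classification of genus-$0$ surfaces. The remaining parts (invariance and orbit-separation) are essentially formal once the definition of $G$ is unpacked. One should also check (implicit in the statement ``isomorphic'') that $\bar\pi$ is the appropriate kind of isomorphism in the category under consideration; in this setting of sets of geometric objects, bijectivity is the content of the claim, with the smooth structure on $\mathcal{F}=\mathcal{P}/G$ being the one inherited from the principal $G$-bundle structure recalled just before the proposition.
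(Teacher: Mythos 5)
Your proposal is correct. Note that the paper itself gives no proof of this proposition: it is treated as the set-level identification underlying the definition of shape space, with the smooth structure on $\mathcal{F}=\mathcal{P}/G$ and the submersion property of $\pi$ deferred to \cite{VizHal}. Your argument supplies exactly the three points that make the identification work, and each is sound: $G$-invariance of $\pi$ is formal from $\gamma\circ\iota=\iota\circ\bar\gamma$; injectivity on orbits follows because two embeddings of $\mathbb{S}^2$ with the same image differ by the diffeomorphism $\gamma=F_2^{-1}\circ F_1$, which preserves the equator setwise and hence lies in $G$; and the only genuinely geometric input is surjectivity, where you correctly invoke the classification of closed genus-$0$ surfaces together with the two-dimensional smooth Schoenflies theorem to move $F_0^{-1}(C)$ to the equator by some $\psi\in\operatorname{Diff}(\mathbb{S}^2)$ (no orientation issue arises, since $G$ is not restricted to orientation-preserving maps). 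Your closing remark about what ``isomorphic'' means is also apt: the bijection is the content of the proposition, and the Fr\'echet-manifold structure making $\pi$ a principal $G$-bundle is quoted from \cite{VizHal} rather than proved here.
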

The shape space $\mathcal{F} = \mathcal{P}/G$ is a smooth manifold and the 
canonical projection $\pi~:\mathcal{P}\rightarrow \mathcal{F}$, $F\mapsto [F]$ is a submersion (see for instance \cite{VizHal}).
The kernel of the differential of this projection is called the \textit{vertical space}.
It is the tangent space to the orbit of $F\in \mathcal{P}$ under the action of the group $G$.

\begin{proposition}\label{ver}
The vertical space $\operatorname{Ver}_F$ of $\pi$ at some embedding $F\in\mathcal{P}$ is the space of vector fields $X_F\in \mathcal{C}^{\infty}(\mathbb{S}^2, \mathbb{R}^3)$ such that the deformation vector field $X_F\circ F^{-1}$ is tangent to the 
surface $\Sigma:= F(\mathbb{S}^2)$ and such that the restriction of $X_F\circ F^{-1}$ to the curve $C:= F\circ\iota(\mathbb{S}^1)$ 
is tangent to $C$.
\end{proposition}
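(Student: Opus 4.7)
The plan is to identify $\operatorname{Ver}_F$ with the tangent space at $F$ to the $G$-orbit $F\cdot G$, since $\pi:\P\to\F$ is stated to be a principal $G$-bundle. The Lie algebra $\mathfrak g:=\operatorname{Lie}(G)$ has just been identified in the excerpt as the space of vector fields $\xi\in\X(\mathbb{S}^2)$ whose restriction along $\iota$ is tangent to the equator.

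For the first inclusion, I would differentiate the orbit map. Given a smooth curve $t\mapsto \gamma_t\in G$ with $\gamma_0=\on{id}_{\mathbb{S}^2}$ and $\dot\gamma_0=\xi\in\mathfrak g$, the chain rule gives
\[
X_F(q):=\tfrac{d}{dt}\Big|_{t=0}(F\o\gamma_t)(q)=dF_q(\xi_q),\qquad q\in\mathbb{S}^2.
\]
Since $F$ is an embedding, $dF_q$ is a linear isomorphism from $T_q\mathbb{S}^2$ onto $T_{F(q)}\Sigma$, so at $p=F(q)\in\Sigma$ one has $(X_F\o F^{-1})(p)=dF_q(\xi_q)\in T_p\Sigma$. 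At a point $p=F(\iota(x))\in C$, the assumption $\xi_{\iota(x)}\in T_{\iota(x)}\iota(\mathbb{S}^1)$ combined with the fact that $F\o\iota$ parameterizes $C$ forces $dF_{\iota(x)}(\xi_{\iota(x)})$ to lie in $T_pC$.

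For the converse inclusion, I would take any $X_F$ satisfying the two conditions and set pointwise $\xi_q:=(dF_q)^{-1}(X_F(q))$. The first condition guarantees $X_F(q)\in\on{Im}(dF_q)$ at every $q$, so this yields a well-defined smooth vector field on $\mathbb{S}^2$, smoothness being a consequence of $dF$ being a smooth vector bundle isomorphism onto $F^*T\Sigma$. The second condition, together with the same isomorphism argument applied to the restriction $d(F\o\iota)$ from $T\mathbb{S}^1$ onto $TC$, shows that $\xi|_{\iota(\mathbb{S}^1)}$ is tangent to the equator, hence $\xi\in\mathfrak g$ and $X_F=dF\o\xi$ is a tangent vector to the orbit $F\cdot G$, i.e.\ an element of $\operatorname{Ver}_F$.

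The only subtle point, and hence the main (mild) obstacle, is the smooth-inversion step in the converse direction: one must use that an embedding has pointwise injective differential to guarantee $\xi$ is globally smooth on $\mathbb{S}^2$, and that tangency of $X_F\o F^{-1}$ to $\Sigma$ is precisely the pointwise condition needed for $(dF_q)^{-1}X_F(q)$ to be defined. Once this bookkeeping is in place, both inclusions reduce to linear algebra applied fiberwise.
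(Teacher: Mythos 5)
Your argument is correct and is essentially the argument the paper intends: the proposition is stated without proof, relying on the preceding identifications that $\operatorname{Ver}_F=\ker T_F\pi$ is the tangent space to the $G$-orbit (via the principal bundle structure of $\pi:\P\to\F$) and that $\operatorname{Lie}(G)$ consists of vector fields on $\mathbb{S}^2$ tangent to the equator along the equator, which is exactly what you differentiate. Your careful treatment of the converse direction (pointwise invertibility of $dF$ giving a smooth $\xi$ with $\xi|_{\iota(\mathbb{S}^1)}$ tangent to the equator) fills in precisely the bookkeeping the paper leaves implicit.
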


\begin{definition}\label{definition1}
The normal bundle $\operatorname{Nor}$ is the vector bundle over the pre-shape space $\mathcal{P}$, whose fiber over an embedding $F$ is the quotient vector space 
\begin{equation}
\operatorname{Nor}_F := T_F\mathcal{P}/\operatorname{Ver}_F.
\end{equation}
\end{definition}

\begin{proposition}\label{proposition3}
The right action of $G$ on $\mathcal{P}$ induces an action on $T\mathcal{P}$  which preserves the vertical bundle, hence it descends to an action on $\operatorname{Nor}$ by vector bundle homomorphisms.  The quotient bundle $ \operatorname{Nor}/G$ can be identified with the tangent bundle $T\mathcal{F}$. 
\end{proposition}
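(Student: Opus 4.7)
The plan is to prove the three assertions in turn: first define the induced $G$-action on $T\mathcal{P}$ and verify that the vertical bundle is preserved; second, descend the action to $\operatorname{Nor}$ and note that it is fiberwise linear; third, identify the resulting quotient with $T\mathcal{F}$ via the differential of $\pi$.

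For the first step, since $\mathcal{P}$ is an open subset of the Fr\'echet space $C^\infty(\mathbb{S}^2,\mathbb{R}^3)$, the right action $F\cdot\gamma = F\circ\gamma$ is (for fixed $\gamma\in G$) the restriction of a continuous linear map, so it is smooth, and its differential in the direction $X_F\in T_F\mathcal{P}$ is simply $X_F\cdot\gamma := X_F\circ\gamma \in T_{F\circ\gamma}\mathcal{P}$. To check that this preserves the vertical bundle, I would take $X_F\in\operatorname{Ver}_F$ and compute
\[
(X_F\circ\gamma)\circ(F\circ\gamma)^{-1} = X_F\circ F^{-1},
\]
which is tangent to $\Sigma$ by Proposition~\ref{ver}. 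For the tangency along $C$, I would use the defining property $\gamma\circ\iota = \iota\circ\bar\gamma$ of elements of $G$ to confirm that the restriction to $C$ of the above vector field is again tangent to $C$. Hence $X_F\circ\gamma\in\operatorname{Ver}_{F\circ\gamma}$.

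The second assertion is then essentially automatic: the map $X_F\mapsto X_F\circ\gamma$ is $\mathbb{R}$-linear on each fiber, covers the diffeomorphism $R_\gamma:\mathcal{P}\to\mathcal{P}$, and by the previous step sends $\operatorname{Ver}_F$ to $\operatorname{Ver}_{F\circ\gamma}$, so it descends to a fiberwise linear isomorphism $\operatorname{Nor}_F\to\operatorname{Nor}_{F\circ\gamma}$, giving an action on $\operatorname{Nor}$ by vector bundle homomorphisms.

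For the third step, I would use that $\pi:\mathcal{P}\to\mathcal{F}$ is a submersion with kernel $\ker d\pi_F = \operatorname{Ver}_F$ (this is the defining property of the vertical space recalled just before Proposition~\ref{ver}). Hence $d\pi_F$ factors through a fiberwise linear isomorphism $\widetilde{d\pi}_F:\operatorname{Nor}_F\xrightarrow{\sim}T_{\pi(F)}\mathcal{F}$. Since $\pi\circ R_\gamma = \pi$, the chain rule gives $d\pi_{F\circ\gamma}\circ R_\gamma{}_* = d\pi_F$, so $\widetilde{d\pi}$ is constant along $G$-orbits and descends to a fiberwise isomorphism $\operatorname{Nor}/G\to T\mathcal{F}$; smoothness follows from the smoothness of the principal $G$-bundle structure of $\pi$ established in \cite{VizHal}.

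The main obstacle is really conceptual rather than computational: one has to be careful that $\operatorname{Nor}/G$ is taken in the sense of vector bundles over $\mathcal{F} = \mathcal{P}/G$ and that the resulting smooth structure matches the one inherited by $T\mathcal{F}$ from the Fr\'echet manifold structure of $\mathcal{F}$. Once the principal bundle and submersion properties of $\pi$ from \cite{VizHal} are invoked, the rest is a direct translation of the standard finite-dimensional identification $TM/G \cong T(M/G)$ (restricted to horizontal complements when they exist, but here realized intrinsically via the normal quotient) to the Fr\'echet setting.
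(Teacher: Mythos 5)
Your proposal is correct and follows essentially the same route the paper takes: the paper states this proposition without a separate proof, relying on the principal $G$-bundle structure of $\pi:\mathcal{P}\to\mathcal{F}$ from \cite{VizHal}, and the reasoning embedded in the proof of Theorem~\ref{identification} (the action $[X_F]\mapsto[X_F\circ\gamma]$ on $\operatorname{Nor}$, with $\operatorname{Ver}$ preserved because it is the kernel of $d\pi$ and $\pi\circ R_\gamma=\pi$). Your write-up simply makes these routine verifications explicit, including the use of $\gamma\circ\iota=\iota\circ\bar\gamma$ and Proposition~\ref{ver}, which is consistent with the paper.
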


Consider a nonlinear flag $(C, \Sigma)$. Let us denote by $\nu$ the unit normal vector field on the oriented surface $\Sigma$, and by $t$ the unit vector field tangent to the oriented curve $C$. Set $n := \nu \times t$ the unit normal to the curve $C$ contained in the tangent space to the surface $\Sigma$. The triple $(t, n,\nu)$ is an orthonormal frame along $C$, called the \textit{Darboux frame}. We will denote by $\langle\cdot, \cdot\rangle$ the Euclidian scalar product on $\mathbb{R}^3$.

\begin{figure*}[ht!]
\begin{center}
\includegraphics[width = 0.4 \linewidth]{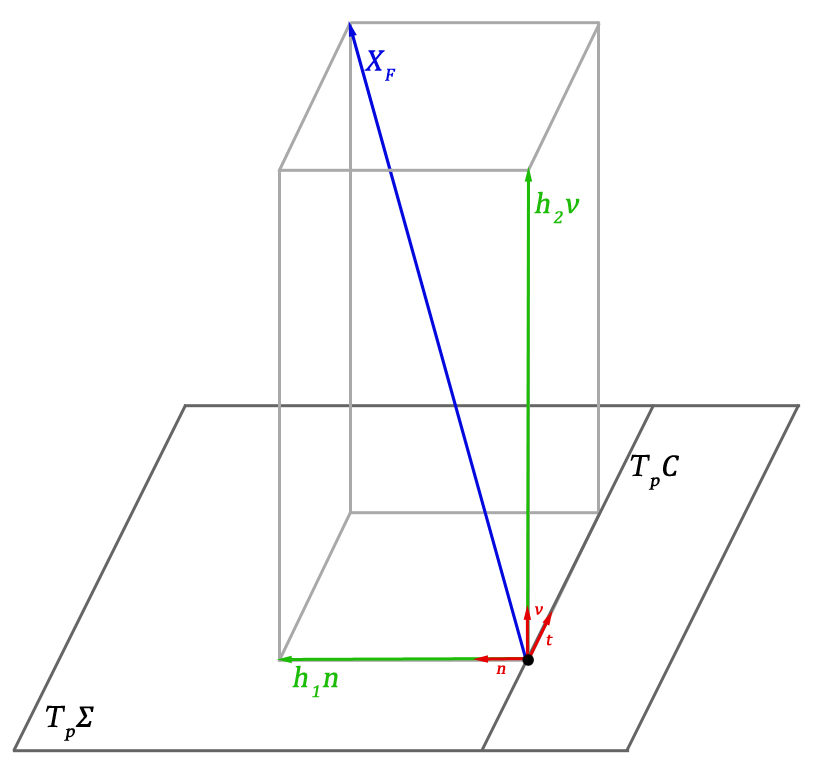}
\caption{Deformation vector field and Darboux frame}
\end{center}
\end{figure*}\label{darb}

\begin{theorem}\label{identification}
Let $F$ be a parameterization of $(C, \Sigma)$. 
Consider the linear surjective map 
\begin{equation}\label{prosec}
\Psi_F~: T_F\mathcal{P} \simeq \mathcal{C}^{\infty}(\mathbb{S}^2, \mathbb{R}^3) \rightarrow \mathcal{C}^{\infty}(C)\times \mathcal{C}^{\infty}(\Sigma),
\end{equation}
which maps $X_F\in T_F\mathcal{P}$ to $(h_1, h_2)$ defined by 
\begin{equation}
\begin{array}{l}
h_1 := \langle (X_F\circ \iota)\circ(F\circ\iota)^{-1}, n\rangle \in \mathcal{C}^{\infty}(C),\\
\\
h_2 := \langle X_F \circ F^{-1}, \nu\rangle \in \mathcal{C}^{\infty}(\Sigma).
\end{array}
\end{equation}
Then the kernel of $\Psi_F$ is 
  the vertical subspace $\operatorname{Ver}_F$, hence $\Psi_F$ defines a map from the quotient space $\operatorname{Nor}_F = T_F\mathcal{P} / \operatorname{Ver}_F$ into  $\mathcal{C}^{\infty}(C)\times \mathcal{C}^{\infty}(\Sigma)$.  The resulting bundle map $\Psi$ is $G$-invariant providing an isomorphism between the tangent space $T_{(C, \Sigma)}\mathcal{F}$ and $\mathcal{C}^{\infty}(C)\times \mathcal{C}^{\infty}(\Sigma)$.
\end{theorem}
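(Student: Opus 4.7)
The plan is to establish the theorem in four steps: compute the kernel of $\Psi_F$, show surjectivity, check equivariance, and then assemble these into the claimed isomorphism of tangent bundles.

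First, I would identify the kernel. By Proposition~\ref{ver}, $X_F\in \operatorname{Ver}_F$ means that $X_F\circ F^{-1}$ is tangent to $\Sigma$ (equivalently, has no component along $\nu$), and that its restriction to $C$ is tangent to $C$ (equivalently, has no component along $n$). The two conditions say precisely $h_2=0$ on $\Sigma$ and $h_1=0$ on $C$, so $\operatorname{Ver}_F\subseteq \ker\Psi_F$. Conversely, assume $\Psi_F(X_F)=0$. Then $\langle X_F\circ F^{-1},\nu\rangle=0$ forces $X_F\circ F^{-1}$ to be tangent to $\Sigma$, and along $C$ the Darboux frame $(t,n,\nu)$ together with the vanishing of its $n$- and $\nu$-components forces it to be proportional to $t$, hence tangent to $C$. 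Thus $\ker\Psi_F=\operatorname{Ver}_F$.

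Next, for surjectivity I would construct explicit preimages. Given $(h_1,h_2)\in \mathcal{C}^{\infty}(C)\times \mathcal{C}^{\infty}(\Sigma)$, extend $n$ from $C$ to a smooth vector field $\tilde n$ on a tubular neighborhood of $C$ in $\Sigma$, choose a cut-off $\chi$ equal to $1$ near $C$ and supported in that neighborhood, and extend $h_1$ to some $\tilde h_1\in\mathcal{C}^{\infty}(\Sigma)$ (for example by pulling it back along the nearest-point projection). Then the vector field $V:= h_2\,\nu+\chi\,\tilde h_1\,\tilde n$ on $\Sigma$ gives $X_F:=V\circ F\in \mathcal{C}^{\infty}(\mathbb{S}^2,\mathbb{R}^3)$ satisfying $\Psi_F(X_F)=(h_1,h_2)$. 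Combined with the kernel computation, this yields a linear isomorphism $\operatorname{Nor}_F=T_F\mathcal{P}/\operatorname{Ver}_F\xrightarrow{\sim}\mathcal{C}^{\infty}(C)\times \mathcal{C}^{\infty}(\Sigma)$.

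For $G$-equivariance, recall that the induced action on $T\mathcal{P}$ is $X_F\cdot\gamma=X_F\circ\gamma$, while the surface $\Sigma=F(\mathbb{S}^2)=(F\circ\gamma)(\mathbb{S}^2)$ and the curve $C$ are unchanged, so the Darboux frame $(t,n,\nu)$ is the same geometric frame. Direct computation gives
\begin{align*}
\langle (X_F\circ\gamma)\circ(F\circ\gamma)^{-1},\nu\rangle &= \langle X_F\circ F^{-1},\nu\rangle,\\
\langle (X_F\circ\gamma\circ\iota)\circ(F\circ\gamma\circ\iota)^{-1},n\rangle &= \langle (X_F\circ\iota)\circ(F\circ\iota)^{-1},n\rangle,
\end{align*}
using $\gamma\circ\iota=\iota\circ\bar\gamma$ for some $\bar\gamma\in\operatorname{Diff}(\mathbb{S}^1)$ to cancel the reparameterization in the second line. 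Hence $\Psi_{F\circ\gamma}(X_F\circ\gamma)=\Psi_F(X_F)$, so $\Psi$ is constant on $G$-orbits in $\operatorname{Nor}$ and descends to a bundle map $\operatorname{Nor}/G\to \mathcal{C}^{\infty}(C)\times\mathcal{C}^{\infty}(\Sigma)$. Combining with Proposition~\ref{proposition3} gives the desired identification of $T_{(C,\Sigma)}\mathcal{F}$. I expect the main (mild) technical point to be the surjectivity step, where one must argue that a smooth extension of the scalar $h_1$ and of the frame vector $n$ off the curve $C$ inside $\Sigma$ exists; all remaining assertions reduce to linear algebra in the Darboux frame and the defining identity of $G$.
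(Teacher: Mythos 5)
Your proof is correct and follows essentially the same route as the paper: identify the kernel with $\operatorname{Ver}_F$ via the Darboux frame and Proposition~\ref{ver}, verify $G$-invariance using $\gamma\circ\iota=\iota\circ\bar\gamma$, and then descend to $\operatorname{Nor}/G$ to get the identification of $T_{(C,\Sigma)}\mathcal{F}$. The only difference is that you spell out surjectivity with an explicit cut-off/extension construction (which works, provided the extension $\tilde n$ is taken tangent to $\Sigma$), a point the paper's proof leaves implicit in the statement of the theorem.
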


\begin{proof}
Consider $X_F$ such that $\Psi_F(X_F) = 0$. Since $h_2 = 0$, $X_F\circ F^{-1}$ is a vector field tangent to $\Sigma$. Since $h_1 = 0$, the restriction of $X_F\circ F^{-1}$ to the curve $C$ given by 
$$
(X_F\circ \iota)\circ(F\circ\iota)^{-1}~:C\rightarrow \mathbb{R}^3
$$
is tangent to $\Sigma $ and orthogonal to $n$, hence it is tangent to $C$.
Thus, by Proposition~\ref{ver},  the kernel of $\Psi$ is exactly $ \operatorname{Ver}_F$.

 Let us show that $\Psi$ is $G$-invariant, i.e. that for $\gamma\in G$,
 \begin{equation}\label{psiinv}
 \Psi_F(X_F) = \Psi_{F\circ\gamma}(X_F\circ\gamma).
 \end{equation}
 One has $\pi(F\circ\gamma) = \pi(F) = (C, \Sigma)$. Moreover  the normal vector fields $\nu~:\Sigma\rightarrow \mathbb{R}^3$ and $n~: C\rightarrow \mathbb{R}^3$ do not depend on the parameterizations of $\Sigma$ and $C$.
 For $\gamma\in G$, we have 
 $$
 (X_F\circ \gamma) \circ (F\circ\gamma)^{-1} = X_F\circ F^{-1}
 $$
 as deformation vector fields on $\Sigma$. 
 On the other hand, using the fact that $\gamma\circ\iota = \iota\circ \bar{\gamma}$ for some $\bar{\gamma}\in  \operatorname{Diff}(\mathbb{S}^1)$, we get
 $$
 \begin{array}{ll}
 (X_F\circ \gamma\circ\iota) \circ (F\circ\gamma\circ\iota)^{-1} & =  (X_F\circ \iota\circ\bar{\gamma}) \circ (F\circ\iota\circ\bar{\gamma})^{-1} \\
& =  (X_F\circ \iota) \circ (F\circ\iota)^{-1}
\end{array}
 $$
 as deformation vector fields on $C$.
The invariance property~\eqref{psiinv} follows.

The projection $\pi~:\mathcal{P}\rightarrow \mathcal{F}= \mathcal{P}/G$ is a principal $G$-bundle, hence the $G$-action preserves the vertical bundle $\operatorname{Ver}$. It  induces a well-defined $G$-action on $\operatorname{Nor}$~: for  $\gamma \in G$,  the class $[X_F] \in \operatorname{Nor}_F$ is mapped to the class $[X_F\circ \gamma] \in \operatorname{Nor}_{F\circ\gamma}$. By $G$-invariance of $\Psi$, we get a well-defined map on $\operatorname{Nor}/G$ which maps isomorphically the tangent space $T_{(C, \Sigma)}\mathcal{F}$ into $ \mathcal{C}^{\infty}(C)\times \mathcal{C}^{\infty}(\Sigma)$.
\end{proof}

\begin{remark}{\rm For the pre-shape space of embedded surfaces, there is a natural section of the projection $T_F\mathcal{P}\to \mathcal{C}^\infty(\Sigma)$,
given by variations that are in the direction of the normal vector field $\nu$ to the surface $\Sigma$. 
In this case, the Euclidean metric on $\mathbb{R}^3$ induces a connection on the principal $\Diff(S^2)$-bundle 
$\P\to\S$, where $\S$ denotes the shape space of surfaces.
Similar sections for the projection \eqref{prosec} are not available for our shape spaces.
In other words, there is no natural principal connection on our principal $G$-bundle $\P\to\F$.}
\end{remark}

 \section{Riemannian metrics on shape spaces of nonlinear flags}
 
 As in \cite{Tum1}, we endow the pre-shape space of parameterized curves and surfaces with a family of gauge invariant metrics which descend to a family of Riemannian metric on shape spaces of curves and surfaces. The construction is explained in subsection~\ref{gauge_invariant_section}. The Riemannian metrics on parameterized curves and surfaces used in this construction are the elastic metrics given in subsection~\ref{riemannian_metrics}. The expression of the Riemannian metrics obtained on nonlinear flags in terms of the geometric invariants of curves and surfaces is given in subsection~\ref{sec:geometric-interp}.
 
\subsection{Procedure to construct the Riemannian metrics}\label{gauge_invariant_section}

In order to construct a Riemannian metric on the space $\mathcal{F}$ of nonlinear flags,  we proceed as follows:
\begin{enumerate}
\item we embed our shape space $\F$ of surfaces decorated with curves in the Cartesian product $\S_1\x\S_2$,
 where $\S_1$ denotes the shape space of curves and $\S_2$ the shape space of surfaces.
 \item we choose a family $g^{a, b}$ of $\operatorname{Diff}^+(\mathbb{S}^1)$-invariant metrics on the space of parameterized curves $\P_1$ (equation~\eqref{eqn:elastic-metric1}) 
 \item the family $g^{a, b}$ defines a family of Riemannian metrics on the shape space of curves $\S_1$ by 
 restricting to the normal variations of curves 
\item we choose a family $g^{a', b', c'}$ of $\operatorname{Diff}^+(\mathbb{S}^2)$-invariant metrics on the space of parameterized surfaces $\P_2$(equation~\eqref{eqn:elastic-metric2}).
\item the family $g^{a', b', c'}$ defines a family of Riemannian metrics on the shape space of sufaces $\S_2$ by 
 restricting to the normal variations of surfaces  
\item the product of these metrics is then restricted to $\F$ using the characterization of the tangent space to $\F$ given in Theorem~\ref{identification}.
\end{enumerate}
\begin{remark}{\rm 
  An equivalent procedure is to  pull back to $\P$, via $F\mapsto ( F\o\io,F)\in\P_1\x\P_2$, 
 the sum of the gauge invariant elastic metrics on the preshape space $\P_1$  for curves and $\P_2$ for surfaces. 
 The result is gauge invariant under $G$, so it descends to a Riemannian metric on the shape space $\F$.
 \begin{equation}\label{E:red.diag}
	\vcenter{
	\xymatrix{
		F\in\P\ \ar[d]_{G}\ar@{^(->}[r]&\P_1\times\P_2\ni(F\o\io,F)\ar[d]^{\Diff(\mathbb{S}^1)\x\Diff(\mathbb{S}^2)}
		\\
		(C,\Si)\in\F\ \ar@{^(->}[r]&\S_1\times \S_2\ni(C,\Si).
	}}
\end{equation}
}
\end{remark}

\subsection{Elastic metrics on manifolds of parameterized curves and surfaces}\label{riemannian_metrics}

The family of Riemannian metric measuring deformations of curves that we will use is the family of $\operatorname{Diff}^+(\mathbb{S}^1)$-invariant elastic metrics on parameterized curves $\P_1$ introduced in \cite{Mio}: 
\begin{equation}\label{elasticmetricdef}
g^{a,b}_{f}(h_1,h_2) = \int_C \big[ a(D_sh_1^{\parallel})(D_sh_2^{\parallel}) + b(D_sh_1^{\perp})(D_sh_2^{\perp})\big] \, d\ell,\\
\end{equation}
where $f\in \mathcal{P}_1$ is a parameterization of the curve $C$, $h_i\in T_f\mathcal{P}_1$ are tangent vectors to the space of parameterized curves, $d\ell = \lVert \dot{f}(t)\rVert dt$,
$D_sh(t) = \tfrac{\dot{h}(t)}{\lVert \dot{f}(t)\rVert}$  is the arc-length derivative of the variation $h$,  $D_sh^{\parallel} = \langle D_sh, t\rangle$ is the component along the unit tangent vector field $t = \tfrac{\dot{f}}{\lVert \dot{f}\rVert}$ to the curve, $D_sh^{\perp} = D_sh - \langle D_sh, t\rangle t$ is the component orthogonal to the tangent vector $t$. Here the $a$-term  measures streching of the curve, while the $b$-term  measures its bending.

Let $\delta f$ denote a  perturbation of a parametrized curve $f:\mathbb{S}^1\to\mathbb{R}^3$, and 
let $(\delta r, \delta {t})$ denote
the corresponding variation of the speed $r := \|\dot{f}(t)\|$ and of the unit tangent vector field $t$. It is easy to check that the squared norm of $\delta f$ for the metric \eqref{elasticmetricdef} reads:
 \begin{equation}\label{eqn:elastic-metric1}
 \E'_f( \delta f)   := g^{a,b}_{f}(\delta f, \delta f) 
=a\int_{\mathbb{S}^1} 
 \left(\frac{\de r}{r}\right)^2d\ell
+b\int_{\mathbb{S}^1}  |\delta {t}|^2 d\ell.
\end{equation}

The family of Riemannian metrics measuring deformations of surfaces that we will use is the family of $\operatorname{Diff}^+(\mathbb{S}^2)$-invariant metrics introduced in \cite{JermynECCV2012} and called elastic metrics. 
Let $\delta F$ denote a  perturbation of a parametrized surface $F$, and 
let $(\delta g, \delta {\nu})$ denote
the corresponding perturbation of the induced metric $g = F^*\langle\cdot,\cdot\rangle_{\mathbb{R}^3}$ and of the unit normal vector field $\nu$. Then the squared norm of $\de F$, namely $ g^{a', b', c'}_F(\delta F, \delta F)$, is:
 \begin{equation}\label{eqn:elastic-metric2}
 \E''_F( \delta F) 
=  
 a'\int_{\mathbb{S}^2} \operatorname{Tr} (g^{-1}\delta g)_{\textbf{0}} ^2dA+b'\int_{\mathbb{S}^2}\operatorname{Tr}(g^{-1}\delta g)^2  dA +c'\int_{\mathbb{S}^2}  |\delta  {\nu}|^2 dA
 \end{equation}
 where $B_{\textbf{0}} $ is the traceless part of a $2\times 2$-matrix $B$ defined as $B_{\textbf{0}}  = B -\frac{\operatorname{Tr}(B)}{2}I_{2\times2}$. 
 The $a'$-term  measures area-preserving changes in the induced metric $g$, the $b'$-term  measures changes in the area of patches, and the $c'$-term measures bending.

 \subsection{Geometric expression of the Riemannian metrics on manifolds of decorated surfaces} \label{sec:geometric-interp}
 
 In this subsection we restrict the reparametrization invariant metrics \eqref{eqn:elastic-metric1} and \eqref{eqn:elastic-metric2} to normal variations.
 This allows us to express them with the help of the principal curvatures $\ka_1$ and $\ka_2$ of the surface,
 geodesic and normal curvatures $\ka_g,\ka_n$ of the curve on the surface, as well as its geodesic torsion $\ta_g$.
 We recall the identities involving  $(t, n,\nu)$, the Darboux frame: $\dot t=r(\ka_g n+\ka_n\nu)$,
 $\dot n=r(-\ka_g t+\ta_g\nu)$ and $\dot\nu=r(-\ka_n t-\ta_g n)$.
 For functions $h$ on the curve we will use the arc-length derivative $D_sh=\dot h/r$, because it is invariant under reparametrizations. 
 
Moreover, we split the $b$-term in \eqref{eqn:elastic-metric1} into two terms in order to put different weights on the variations along $\nu$ and $n$.
 This leads to the following result~:
 
 \begin{theorem}\label{theorem_expression_metric}
The gauge invariant elastic metrics for parameterized curves respectively surfaces lead to
 a 6-parameter family of Riemannian metrics on the shape space of embedded surfaces decorated with curves:
  \begin{align}\label{geometric_expression}
 \G_{(C,\Sigma)}(h_1,h_2)&=
 a_1\int_C(h_1\ka_g+{h_2}|_C\ka_n)^2d\ell
 &+ 
a_2\int_{\Sigma}(h_2)^2(\ka_1-\ka_2)^2dA\nonumber\\
& +b_1\int_C(D_sh_1-{h_2}|_C\tau_g)^2d\ell
 &+
 b_2\int_{\Sigma} (h_2)^2(\ka_1+\ka_2)^2dA\nonumber\\
&+c_1\int_C(D_s({h_2}|_C)+h_1\tau_g)^2d\ell
&+
c_2\int_{\Sigma} |\nabla h_2|^2 dA,
 \end{align}
 for $h_1\in \C^\infty(C)$ and $h_2\in \C^\infty(\Sigma)$.
 \end{theorem}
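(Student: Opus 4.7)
\medskip\noindent\textbf{Proof plan.} By the construction of $\G$ as the pullback diagram~\eqref{E:red.diag} followed by the projection onto normal variations of subsection~\ref{gauge_invariant_section}, the value $\G_{(C,\Si)}(h_1,h_2)$ equals $\E''_F(h_2\,\nu)+\E'_{F\o\io}(h_1\,n+h_2|_C\,\nu)$, where $\E''$ and $\E'$ denote the elastic metrics~\eqref{eqn:elastic-metric2} and~\eqref{eqn:elastic-metric1}. Indeed $h_2\,\nu$ is the canonical normal representative on $\Si$, and $h_1\,n+h_2|_C\,\nu$ is the canonical normal representative on $C$, singled out by the $\langle\,\cdot\,,\nu\rangle$ and $\langle\,\cdot\,,n\rangle$ projections of Theorem~\ref{identification}. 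I would therefore evaluate each of the three terms of~\eqref{eqn:elastic-metric2} on $\de F=h_2\nu$, and each term of~\eqref{eqn:elastic-metric1} (split into three after separating the $n$- and $\nu$-directions, as announced just before the theorem) on $\de f=h_1\,n+h_2|_C\,\nu$, and then read off the six integrands of~\eqref{geometric_expression}.

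For the surface contribution, the standard formulas for normal deformations give $\de g=-2h_2\,II$ and $\de\nu=-\nabla h_2$, hence $g^{-1}\de g=-2h_2\,S$ with $S$ the shape operator of eigenvalues $\ka_1,\ka_2$. A direct computation then yields $\on{Tr}(g^{-1}\de g)=-2h_2(\ka_1+\ka_2)$ and $\on{Tr}\bigl((g^{-1}\de g)_{\mathbf 0}^2\bigr)=2h_2^2(\ka_1-\ka_2)^2$. Together with $|\de\nu|^2=|\nabla h_2|^2$, these three expressions reproduce exactly the three surface integrands of~\eqref{geometric_expression}, up to the relabelings $a_2=2a'$, $b_2=4b'$, $c_2=c'$.

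For the curve contribution, I would expand $D_s\de f$ for $\de f=h_1\,n+h_2|_C\,\nu$ in the Darboux frame, using $D_sn=-\ka_g t+\ta_g\nu$ and $D_s\nu=-\ka_n t-\ta_g n$, to obtain
\[
D_s\de f=-(h_1\ka_g+h_2|_C\ka_n)\,t+(D_sh_1-h_2|_C\ta_g)\,n+(D_s(h_2|_C)+h_1\ta_g)\,\nu.
\]
The tangential coefficient is $\de r/r$ and, through the $a$-term of~\eqref{eqn:elastic-metric1}, yields the $a_1$-integrand of~\eqref{geometric_expression}. The two remaining coefficients are the $n$- and $\nu$-components of $\de t=(D_s\de f)^\perp$; assigning them independent weights $b_1$ and $c_1$ inside the $b$-term of~\eqref{eqn:elastic-metric1} produces the $b_1$- and $c_1$-integrands of~\eqref{geometric_expression}.

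The only step that is not a direct substitution is the surface identity $\on{Tr}\bigl((g^{-1}\de g)_{\mathbf 0}^2\bigr)=2h_2^2(\ka_1-\ka_2)^2$, which I would derive from the general $2\x2$ formula $\on{Tr}(B_{\mathbf 0}^2)=\on{Tr}(B^2)-\tfrac12\on{Tr}(B)^2$ applied to $B=-2h_2\,S$, together with $\on{Tr}(S^2)=\ka_1^2+\ka_2^2$ obtained by diagonalising in principal directions. Summing the surface and curve contributions and renaming the six constants then yields the formula~\eqref{geometric_expression}.
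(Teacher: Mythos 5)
Your proposal is correct and follows essentially the same route as the paper: restrict the elastic metrics to the canonical normal representatives $h_1 n+h_2|_C\,\nu$ on the curve and $h_2\nu$ on the surface, compute the induced variations ($\de r/r$ and $\de t$ for the curve, $g^{-1}\de g=-2h_2 S$ and $\de\nu$ for the surface), split the curve $b$-term into its $n$- and $\nu$-components, and rename constants. The only cosmetic differences are that you derive the surface expression directly via $\on{Tr}(B_{\mathbf 0}^2)=\on{Tr}(B^2)-\tfrac12\on{Tr}(B)^2$ where the paper cites Eqn.~(12) of \cite{Tum1} together with Lemmas \ref{delt} and \ref{deltsurf}, and you organize the curve computation through $D_s\de f$ and $\de t=(D_s\de f)^\perp$ rather than varying $r$ and $t$ separately; both are equivalent computations.
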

 
 \begin{proof}
Let $(t,n,\nu)$ be the Darboux frame along the curve $C\subset\Si$.
The normal vector field $h_1n+(h_2|_C)\nu$  to the curve $C$ encodes the variation of the curve which doesn't leave the surface $\Si$.
Using the Lemma \ref{delt}, we obtain the following expression for the elastic metric \eqref{eqn:elastic-metric1} restricted to this normal variation:
\begin{align*}
\E'_F(h_1n+(h_2|_C)\nu)&= a\int_{C} (h_1\ka_g+{h_2}|_C\ka_n)^2d\ell\\
&+b\int_C\left((D_sh_1-{h_2}|_C\tau_g)^2+(D_s({h_2}|_C)+h_1\tau_g)^2\right)d\ell.
\end{align*}
We will split the $b$-term in two parts, thus obtaining a 3-parameter family of metrics, namely 
$$a_1\int_{C} (h_1\ka_g+{h_2}|_C\ka_n)^2d\ell+b_1\int_C(D_sh_1-{h_2}|_C\tau_g)^2d\ell+c_1\int_C(D_s({h_2}|_C)+h_1\tau_g)^2d\ell.$$

The normal vector field $h_2\nu$ to the surface $\Sigma$ encodes the variation of the surface.
Using Eqn.~(12) in \cite{Tum1} the 
elastic metric \eqref{eqn:elastic-metric2} restricted to this normal variation is given by the following geometric expression~:
\begin{equation}\label{expression_metric}
 \E''_F( h_2\nu)=2a\int_{\Si}  (h_2)^2(\kappa_1 - \kappa_2)^2 dA  + 4b \int_{\Si}  (h_2)^2(\kappa_1+\kappa_2)^2dA
 + c\int_{\Si}\,\,  |\nabla h_2|^2 dA.
\end{equation}
Here we use the fact that $g^{-1}\de g=-2h_2L$, where $L$ is the shape operator of the surface, as well as the identity $|\de\nu|=|\nabla h_2|$, where $\nabla$ denotes the gradient with respect to the induced metric on the surface, by Lemma \ref{deltsurf}.

Renaming the parameters and adding  to this elastic metric for the surface  the elastic metric for the curve on the surface obtained above
leads to the 6-parameter family of elastic metrics for the shape space $\F$.
\end{proof}

\begin{remark}{\rm Assuming that the functions $h_1,h_2$ are constant along the curve $C$,
the $b_1$-term becomes $\int_C({h_2}|_C\tau_g)^2d\ell$ and encodes the variation of the curve normal to the surface (variation together with the surface)
while the $c_1$ term becomes $\int_C(h_1\tau_g)^2d\ell$ and encodes the normal variation of the curve inside the surface.}
\end{remark}

\begin{lemma}\label{delt}
Given the normal variation $\de f=h_1 n+h_2|_C \nu$ of the parametrized curve $f=F\circ\io$ on the parametrized surface $F$,
the variation of the speed $r$ and of the unit tangent vector field $t$ are
\begin{gather*}
\de r=-r(h_1\ka_g+h_2|_C\ka_n)\\
\de t=(\tfrac{1}{r}\dot h_1-h_2|_C\ta_g)n+(\tfrac{1}{r}\dot h_2|_C+h_1\ta_g)\nu
\end{gather*}
\end{lemma}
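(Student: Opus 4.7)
The plan is to differentiate the ansatz $\delta f = h_1 n + h_2|_C \nu$ with respect to the curve parameter and compare the result, in the Darboux basis, with the independent expression obtained by varying the identity $\dot f = r t$. Since partial derivatives in the variation parameter and in the curve parameter commute for smooth variations, one has $\delta\dot f = (\delta f)^{\cdot}$, while from $\dot f = rt$ one gets $\delta\dot f = (\delta r)\, t + r\,\delta t$. Because $t$ is a unit vector field, $\langle t,t\rangle = 1$ forces $\delta t \perp t$; therefore the $t$-component of $(\delta f)^{\cdot}$ yields $\delta r$ outright, and the $n$- and $\nu$-components yield $r\,\delta t$.

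Next I would compute $(\delta f)^{\cdot}$ explicitly using the Darboux structure equations $\dot t = r(\kappa_g n + \kappa_n \nu)$, $\dot n = r(-\kappa_g t + \tau_g \nu)$, $\dot\nu = r(-\kappa_n t - \tau_g n)$. Applying the product rule gives
\begin{equation*}
(\delta f)^{\cdot} = \dot h_1\, n + h_1 \dot n + \dot h_2|_C\, \nu + h_2|_C\, \dot\nu,
\end{equation*}
and substituting the Darboux equations produces one term along $t$, namely $-r(h_1\kappa_g + h_2|_C \kappa_n)\, t$, and two mixed $n,\nu$-terms combining the direct derivatives $\dot h_1$, $\dot h_2|_C$ with the torsion contributions $\pm r\tau_g$ coming from $\dot n$ and $\dot\nu$.

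Finally, matching components in the orthonormal Darboux frame $(t,n,\nu)$ against $(\delta r)\,t + r\,\delta t$ gives simultaneously $\delta r = -r(h_1\kappa_g + h_2|_C \kappa_n)$, and, after dividing by $r$, the two components $(\delta t)_n = \tfrac{1}{r}\dot h_1 - h_2|_C \tau_g$ and $(\delta t)_\nu = \tfrac{1}{r}\dot h_2|_C + h_1 \tau_g$, which is exactly the claim.

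The computation is essentially a direct application of the Darboux frame equations together with the commutation of the variation operator with the curve derivative. The only real care needed is bookkeeping of the signs: the $-\kappa_g t$ in $\dot n$ and the $-\kappa_n t$ in $\dot\nu$ both contribute to $\delta r$ with the correct negative sign, while the $\tau_g$ cross-terms enter $\delta t$ with opposite signs in the $n$- and $\nu$-directions, which is precisely what the formula records. There is no deeper obstacle; the lemma is a direct manifestation of the Darboux structure equations for the orthonormal frame along a curve on a surface.
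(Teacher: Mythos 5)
Your proof is correct and follows essentially the same route as the paper: both compute the curve-parameter derivative of $\de f=h_1n+h_2|_C\nu$ via the Darboux equations and compare it with the variation of $\dot f=rt$. The only cosmetic difference is that you extract $\de r$ from the $t$-component using $\de t\perp t$, whereas the paper obtains it by varying $r^2=|\dot f|^2$ and then solves for $\de t$ from $\de t=\tfrac1r\de\dot f-\tfrac{\de r}{r}t$.
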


\begin{proof}
For $f_\ep=f+\ep(h_1n+h_2\nu)+O(\ep^2)$, we get
\[
r_\ep^2=r^2-2\ep r^2(h_1\ka_g+h_2|_C\ka_n)+O(\ep^2)
\]
using the well known identities $\dot n=r(-\ka_g t+\ta_g\nu)$ and $\dot\nu=r(-\ka_n t-\ta_g n)$.
Thus $2r\de r=-2r^2(h_1\ka_g+h_2|_C\ka_n)$, hence the first identity.
We use it in the computation of the variation of the unit tangent goes as follows:
\begin{align*}
\de t&=\tfrac{1}{r}\de\dot f-\tfrac{\de r}{r}t=\tfrac{1}{r}(-r(h_1\ka_g+h_2|_C\ka_n)t\\
&+(\dot h_1-rh_2|_C\ta_g)n+(\dot h_2|_C+rh_1\ta_g)\nu)+(h_1\ka_g+h_2|_C\ka_n)t\\
&=(\tfrac{\dot h_1}{r}-h_2|_C\ta_g)n+(\tfrac{\dot h_2|_C}{r}+h_1\ta_g)\nu,
\end{align*}
hence the second identity.
\end{proof}

\begin{lemma}\label{deltsurf}
Given the normal variation $\de F=h\nu$ of the parametrized surface  $F$,
the variation of  the unit normal vector field $\nu$ and the gradient of $h$
with respect to the induced metric on the surface have the same norm.
\end{lemma}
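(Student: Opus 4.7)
The plan is to compute $\delta\nu$ explicitly and recognize it as (minus) the gradient of $h$ with respect to the induced metric. The key fact I will use is that since $|\nu|^2 = 1$ identically, varying this gives $\langle \delta\nu, \nu\rangle = 0$, so $\delta\nu$ is automatically tangent to $\Sigma$. Thus it suffices to identify the tangential components.

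I would work in local coordinates $(u^1,u^2)$ on $\mathbb{S}^2$, writing $F_i := \partial F/\partial u^i$ for the coordinate tangent vectors, $g_{ij} = \langle F_i, F_j\rangle$ for the induced metric, and $g^{ij}$ for its inverse. The variation of the surface is $\delta F = h\nu$, which gives $\delta F_i = \partial_i h\,\nu + h\,\partial_i \nu$. The normal $\nu$ is characterized by the two conditions $|\nu|^2 = 1$ and $\langle \nu, F_i\rangle = 0$; perturbing the second and using $\delta\nu = a^j F_j$ (a purely tangential expression), I obtain
\begin{equation*}
0 = \langle \delta\nu, F_i\rangle + \langle \nu, \delta F_i\rangle = g_{ij} a^j + \partial_i h,
\end{equation*}
since $\langle \nu, \partial_i\nu\rangle = 0$ and $\langle \nu,\nu\rangle = 1$. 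Solving for the coefficients gives $a^j = -g^{ij}\partial_i h$, i.e.\ $\delta\nu = -\nabla h$, where $\nabla h = g^{ij}\partial_i h\, F_j$ is the gradient of $h$ with respect to the induced metric.

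Finally, computing the squared norm,
\begin{equation*}
|\delta\nu|^2 = g_{jk}\,a^j a^k = g_{jk}\, g^{ij}\partial_i h\, g^{lk}\partial_l h = g^{il}\partial_i h\,\partial_l h = |\nabla h|^2,
\end{equation*}
which gives the asserted identity $|\delta\nu| = |\nabla h|$. I do not anticipate any real obstacle here: the only point to be careful about is verifying that $\delta\nu$ has no normal component, which follows at once from the unit-length constraint, after which the computation is a one-line application of the variation of the orthogonality relation $\langle \nu, F_i\rangle = 0$.
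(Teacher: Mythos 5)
Your proof is correct and follows essentially the same route as the paper: both work in local coordinates, identify $\delta\nu$ with $-\nabla h$ (the paper quotes the variation formula $\delta\nu=-(h_u,h_v)g^{-1}(F_u,F_v)^\top$ from its reference \cite{Tum2}, while you rederive it from the constraints $|\nu|^2=1$ and $\langle\nu,F_i\rangle=0$), and then the equality of norms is immediate. Your version is slightly more self-contained and in fact establishes the stronger pointwise identity $\delta\nu=-\nabla h$, of which the lemma's statement on norms is a direct consequence.
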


\begin{proof}
Let $(u,v)$ denote coordinates on $\mathbb{S}^2$ and let $F_u,F_v$ denote the partial derivatives of $F$ (and similarly for $h$). 
Then, as in \cite{Tum2}, we get the variation
\begin{equation*}
\de \nu=-(h_u,h_v)g^{-1}(F_u,F_v)^\top.
\end{equation*}
On the other hand $\nabla h=g^{-1}(h_u,h_v)^\top$.
Now we compute
\begin{align*}
|\de\nu |^2&=(h_u,h_v)g^{-1}(F_u,F_v)^\top(F_u,F_v)g^{-1}(h_u,h_v)^\top\\
&
=(h_u,h_v)g^{-1}(h_u,h_v)^\top=(\nabla h)^\top g\nabla h=|\nabla h|^2,
\end{align*}
using the fact that $g=(F_u,F_v)^\top(F_u,F_v)$.
\end{proof}

 \section{Conclusion}
 
 In this paper, we identify the tangent spaces to nonlinear flags consisting of surfaces of genus zero decorated with a simple curve.
 We use gauge invariant metrics on parameterized curves and surfaces to endow the space of nonlinear flags with a family of Riemannian metrics, whose expression is given in terms of geometric invariants of curves and surfaces.

\subsubsection{Acknowledgements} The second author is supported by FWF grant I 5015-N.
The first and the third authors are  supported
by a grant of the Romanian Ministry of Education and Research, CNCS-UEFISCDI, project
number PN-III-P4-ID-PCE-2020-2888, within PNCDI III.

%
%
%

\begin{thebibliography}{8}
\bibitem{Drira} H. Drira, A.B.Tumpach, M. Daoudi, Gauge invariant framework for trajectories analysis, DIFFCV Workshop 2015, DOI: 10.5244/C.29.DIFFCV.6,

\bibitem{VizHal} S.~Haller, C.~Vizman, \textit{Nonlinear flag manifolds as coadjoint orbits}, Ann. Global Anal. Geom. 58(2020), 385--413. 

 
\bibitem{JermynECCV2012} I. H. Jermyn, S. Kurtek, E. Klassen, and A. Srivastava, \textit{Elastic shape matching of parameterized surfaces using square root normal fields,} in ECCV (5), 2012, pp. 804--817.

\bibitem{Mio} W. Mio, A. Srivastava, and S. H. Joshi, \textit{On shape of plane elastic curves,} International Journal of Computer Vision, vol. 73, no. 3, pp. 307--324, 2007.

\bibitem{Tum1} A.B. Tumpach,\textit{ Gauge Invariance of degenerate Riemannian metrics}, Notices of American Mathematical Society, April 2016. \textcolor{blue}{\href{http://math.univ-lille1.fr/~tumpach/Site/research_files/Notices_full.pdf}{Notices\_full.pdf}}

\bibitem{Tum2} A.B. Tumpach, H. Drira, M. Daoudi, A. Srivastava, \textit{Gauge Invariant Framework for Shape Analysis of Surfaces}. IEEE Transactions on Pattern Analysis and Machine Intelligence, January 2016, Volume 38, Number 1.	\textcolor{blue}{\href{https://doi.org/10.1109/TPAMI.2015.2430319}{TPAMI.2015.2430319}}

\bibitem{TumPres} A.B. Tumpach and S. Preston, \textit{3 methods to put a Riemannian metric on shape spaces}, submitted to GSI23.

\bibitem{TumPres2} A.B. Tumpach, S.C. Preston, Riemannian metrics on shape spaces: comparison of
different constructions, in progress.

\bibitem{Srivastava} A. Srivastava, E. Klassen, S. H. Joshi, and I. H. Jermyn, \textit{Shape analysis of elastic curves in euclidean spaces,} IEEE Trans. Pattern Anal. Mach. Intell., vol. 33, no. 7, pp. 1415--1428, 2011.
\end{thebibliography}
%

\end{document}